\newtheorem{theo}{Theorem}
\newtheorem{prop}{Proposition}
\newtheorem{lemma}{Lemma}
\newcommand{\R}{\mathbb{R}}	
\newcommand{\Div}{\textrm{div}\,}	
\newcommand{\na}{\nabla}		
\def\dive{\textnormal{div}}
\title{Uniqueness of higher integrable solution to the Landau equation with Coulomb interactions}
\author{Jann-Long Chern and Maria Pia Gualdani}
\address{Department of Mathematics, The University of Texas at Austin,  Austin TX, USA. }
\email{gualdani@math.utexas.edu}
\address{Department of Mathematics, National Taiwan Normal University, Taipei 11677, Taiwan.}
\email{chern@math.ntnu.edu.tw}
\date{}
\begin{document}

\thanks{JLC is support by MOST 110-2115-M-003-019-MY3 and MOST 111-2218-E-008-004-MBK. 
MPG is supported by NSF DMS-2019335. JLC and MPG would like to thank KTH Royal Institute of Technology and then NCTS Mathematics Division Taipei for their kind hospitality.} 

\begin{abstract}
We are concerned with the uniqueness of weak solution to the spatially homogeneous Landau equation with Coulomb interactions under the assumption that the solution is bounded in the space $L^\infty(0,T,L^p(\R^3))$ for some $p>3/2$. The proof uses a weighted Poincar\'e-Sobolev inequality recently introduced in \cite{GG18}.
\end{abstract}

\maketitle

\pagestyle{headings}		

\markboth{Uniqueness of smooth solutions to Landau-Coulomb}{J.L. Chern, M. Gualdani}

\section{Introduction} 
The Landau equation was introduced in $1936$ by Lev Landau as a correction of the Boltzmann equation to describe collision of particles interacting under a potential of Coulomb type. Collisions of such kind are predominant in hot plasma. In its homogeneous form  the Landau equation reads as 
\begin{align}\label{eqn:Landau}
  \partial_tf =Q(f,f),
\end{align}
where $f = f(v,t)$  for $v\in \R^3,\; t>0$ is a nonnegative function describing the evolution of the particle density and 
\begin{align}\label{eqn:Landau Collisional Term Classical Expression}
  Q(f,f) :=  \frac{1}{8\pi}  \dive\left (\int_{\mathbb{R}^3} \frac{1}{|v-w|}\left (\Pi(v-w)(f(w)\nabla_vf(v)-f(v)\nabla_wf(w) \right )\;dw \right),
  \end{align}
  with $\Pi(z)$ the projection onto the orthogonal subspace of $z$,
  \begin{align*}
  \Pi(z) := \mathbb{Id}- \frac{ z\otimes z}{|z|^{2}}, \quad z\neq 0.
  \end{align*}
  Equation (\ref{eqn:Landau})-(\ref{eqn:Landau Collisional Term Classical Expression}) has been extensively studied in the literature but the main question whether or not after a certain time solutions could become unbounded  is still open. The possible blow-up in the $L^\infty$-norm could be caused by the quadratic nonlinearity in (\ref{eqn:Landau Collisional Term Classical Expression}): assuming that $f$ is smooth enough, one can rewrite (\ref{eqn:Landau Collisional Term Classical Expression}) as 
$$
 Q(f,f) =  \Div(A[f]\na f - f\na a[f]),
$$
where $A[f]$ is the diffusion matrix defined as 
$$
A[f](v,t)= \{a_{i,j}\}_{i,j} := \frac{1}{8\pi} \int_{\R^3}  \frac{1}{|w|} \left(  \mathbb{Id} - \frac{w \otimes w}{|w|^2} \right) f(v-w,t)\;dw,
$$
and 
$$
a[f](v,t) := \textrm{tr}(A[f]) = (-\Delta)^{-1} f,
$$
or in non-divergence form
\begin{align*}
  Q(f,f) = \textrm{tr}(A[f]D^2(f))+f^2.
\end{align*}
In the last formulation the quadratic nonlinearity is explicit. 

Before we state the main result of this manuscript we briefly review the literature for (\ref{eqn:Landau})-(\ref{eqn:Landau Collisional Term Classical Expression}), omitting the rather large literature on non-Coulomb potentials and spatially inhomogeneous case. Existing literature for (\ref{eqn:Landau})-(\ref{eqn:Landau Collisional Term Classical Expression}) includes results on (i) local in time well-posedness of solutions, (ii) global in time existence and uniqueness of smooth solution for initial data close to equilibrium \cite{G02}, (iii) global in time existence of (very) weak solutions \cite{AleVil2004, Des2015,Vil1998} , and (iv) convergence of weak solutions towards the equilibrium function (Maxwellian) in the $L^1$-norm \cite{CDH17}.  Very recently the second author and collaborators studied the partial regularity of weak solutions to (\ref{eqn:Landau})-(\ref{eqn:Landau Collisional Term Classical Expression}) and showed in  \cite{GGIV19} that the Hausdorff measure of the set of singular times (i.e. times at which the function could be unbounded) is at most $\frac{1}{2}$.  We also mention an important result from \cite{GG}; there the authors study an {\em{isotropic}} version of the Landau equation, previously introduced by Krieger and Strain in \cite{KS},
\begin{align}\label{eqn:Krieger-Strain}
  \partial_t f = \Div(a[f]\na f - f\na a[f]),
\end{align}	
and show that (\ref{eqn:Krieger-Strain}) with spherically symmetric and radially decreasing initial data (but not small neither near equilibrium!) has smooth solutions which remain bounded for all times.

Since the main question of global well-posedness for general initial data  for (\ref{eqn:Landau})-(\ref{eqn:Landau Collisional Term Classical Expression}) is still open, in the most recent years there have been several conditional proofs of existence of bounded solutions and their regularity. In this directions we mention \cite{L17, GG, GG18, GIMV17}.

In the current manuscript we are concerned with uniqueness of weak solutions in the class of higher integrable solutions, namely we assume that weak solutions belong to $L^\infty(0,T,L^p(\R^3))$ for some $p>\frac{3}{2}$ and have high enough bounded moments.  Conditional uniqueness of bounded weak solutions for Landau-Coulomb has been previously studied in \cite{F10}; via a probabilistic approach using a stochastic representation of (\ref{eqn:Landau})-(\ref{eqn:Landau Collisional Term Classical Expression}) the author shows uniqueness in the class of solutions $L^1(0,T,L^\infty(\mathbb{R}^3))$. A similar approach was recently used in \cite{SW19} for the relativistic Landau-Coulomb equation.

Here is our main result:

\begin{theo}\label{main_thm_1}
The homogeneous Landau-Coulomb equation with initial data such that
\begin{align}\label{cond_init_data}
f_{in} \ge 0, \quad  \int_{\R^3} f^2_{in} (1+|v|)^{{5}}\;dv \le C, \quad \int_{\R^3} f_{in} (1+|v|)^q\;dv \le C,
\end{align}
for $q =  \frac{{{46}}(p-1)}{p-3/2}$,  has at most one solution in the time interval $[0,T]$, $T>0$, in the class of functions  
\begin{align}
f\in L^\infty(0,T,L^p(\R^3)), \quad {{p >3/2}}.
\end{align}
\end{theo}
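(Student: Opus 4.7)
The plan is a weighted energy argument of Gronwall type on the difference $g := f_1 - f_2$ of two solutions sharing the initial datum. Starting from the divergence form $Q(f,f) = \Div(A[f]\na f - f\na a[f])$ and subtracting the two equations, one gets
\begin{align*}
\pa_t g = \Div\bigl(A[f_1]\na g - g\,\na a[f_1]\bigr) + \Div\bigl(A[g]\na f_2 - f_2\,\na a[g]\bigr).
\end{align*}
Using the pointwise identity $\Div A[h] = \na a[h]$, this is equivalent to the non-divergence form $\pa_t g = A[f_1]:D^2 g + A[g]:D^2 f_2 + (f_1+f_2)g$. I would test against $g\,\langle v\rangle^{2\alpha}$ for a sufficiently large parameter $\alpha$, and aim at a closed differential inequality for $\mathcal E(t) := \int_{\R^3} g^2\,\langle v\rangle^{2\alpha}\,dv$.

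The principal coercive term produced by the energy is $\int A[f_1]\na g\cdot \na g\,\langle v\rangle^{2\alpha}\,dv$. Since $f_1\in L^\infty_t L^p$ with $p>3/2$ and has the moments required by \eqref{cond_init_data}, $A[f_1]$ is bounded from below by $c\,\langle v\rangle^{-3}\mathbb{Id}$ (Desvillettes--Villani), and the weighted Poincar\'e--Sobolev inequality of \cite{GG18} provides the crucial control
\begin{align*}
\int g^2 f_1\,\langle v\rangle^{2\alpha}\,dv + \bigl\|g\,\langle v\rangle^{\alpha}\bigr\|_{L^r}^{2} \le C\int A[f_1]\na g\cdot \na g\,\langle v\rangle^{2\alpha}\,dv + \text{lower order},
\end{align*}
for a Sobolev-type exponent $r=r(p)>2$. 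The first piece absorbs the $\tfrac{1}{2}\int g^2 f_1 \langle v\rangle^{2\alpha}\,dv$ that arises from integrating by parts the drift $\Div(g\,\na a[f_1])$ via $\Delta a[f_1] = -f_1$, and the second piece provides the integrability gain needed to estimate the genuinely quadratic cross terms in $g$.

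The heart of the matter is the control of the contributions built from $A[g]\na f_2$, $f_2\na a[g]$, and $f_2 g^2$. For these I would use Hardy--Littlewood--Sobolev and Calder\'on--Zygmund, giving $\|A[g]\|_{L^\infty}\le C(\|g\|_{L^1}+\|g\|_{L^p})$ and $\|\na a[g]\|_{L^{3p/(3-p)}}\le C\|g\|_{L^p}$ when $3/2<p<3$, together with integration by parts to move the uncontrolled $\na f_2$ either onto $g$ or onto the nonlocal kernels, and interpolate $\|g\|_{L^p}$ between $\mathcal E(t)^{1/2}$ and the $L^\infty_t L^p$ bound of the theorem. The polynomial weight $\langle v\rangle^\alpha$ is chosen to absorb the decay loss in HLS. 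The explicit exponent $q = 21(p-1)(p/2+3/4)/(p-3/2)$ is exactly what is needed for every one of these interpolations to close simultaneously; the required moments are in turn propagated in time from the datum by the classical Landau--Coulomb moment estimate combined with the assumed $L^p$ bound. Gronwall and $\mathcal E(0)=0$ then force $g\equiv 0$.

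The main obstacle is this last step: each cross term is essentially critical at the $L^{3/2}$-scaling of Landau--Coulomb, and only the combined use of the weighted Poincar\'e--Sobolev gain of \cite{GG18}, of several integrations by parts, and of the polynomial moments --- each of which is barely sufficient on its own --- allows the estimate to close. This is also why the hypothesis \eqref{cond_init_data} carries the specific polynomial expression in $p$, whose factors $(p-1)$, $(p/2+3/4)$ and $(p-3/2)^{-1}$ reflect respectively the H\"older pairing, the relevant Sobolev conjugate, and the distance of $p$ from the scaling-critical threshold $3/2$.
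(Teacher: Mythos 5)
Your overall strategy coincides with the paper's: an $L^2$-type Gronwall argument on the difference, coercivity from $A[f_1]\ge c\langle v\rangle^{-3}\,\mathbb{Id}$, the weighted $\varepsilon$-Poincar\'e inequality of \cite{GG18} to absorb the quadratic term $\int (f_1+f_2)g^2$, and Sobolev/HLS bounds on $A[g]$ and $\nabla a[g]$ for the cross terms. However, there is a concrete gap in how you propose to close the cross terms containing $\nabla f_2$. The term $\int A[g]\,\nabla f_2\cdot\nabla g\,dv$ cannot be disposed of by ``moving $\nabla f_2$ onto $g$ or onto the nonlocal kernels'': integrating by parts produces either $D^2 f_2$ or $D^2$ of the kernel acting on $g$, neither of which is available for weak solutions in this class. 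The paper instead keeps $\nabla f_2$ and pays for it with an a priori weighted gradient estimate for the solutions themselves, namely $\int_0^T\int_{\R^3}(1+|v|)^3|\nabla f|^2\,dv\,dt\le C$ (Proposition \ref{Prop_Weighted_Gradient_Estimate}), which makes the Gronwall coefficient $B(t)$ integrable in time. That estimate is not free: it rests on a chain of preliminary bounds --- $f\in L^2_{t,v}$ from the entropy production inequality combined with the $\varepsilon$-Poincar\'e inequality (Theorem \ref{theo3}), then $f\in L^\infty_tL^2_v$ by Gronwall (Theorem \ref{theo4}), then the weighted $H^1$ bound by testing with $f(1+|v|)^6$ and using the decay $a[f]\le C(1+|v|)^{-1}$. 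None of this appears in your outline, and without it your Gronwall factor is not known to be in $L^1(0,T)$.

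A second, smaller but real danger is your interpolation step. For Gronwall to force $\mathcal E\equiv 0$ from $\mathcal E(0)=0$ you need every cross term bounded by $B(t)\,\mathcal E(t)$ to the \emph{first} power; a differential inequality $\mathcal E'\le C\mathcal E^\beta$ with $\beta<1$ does not give uniqueness. Interpolating $\|g\|_{L^p}$ between $\mathcal E^{1/2}$ and the fixed $L^\infty_tL^p$ bound produces sublinear powers of $\mathcal E$ unless the remaining factors in each term supply exactly the complementary power. The paper sidesteps this by estimating the difference only through $\|w\|_{L^2}$-linear quantities, $\|a[w]\|_{L^\infty}\le C\|w\|_{L^2}$ and $\|\nabla a[w]\|_{L^6}\le C\|w\|_{L^2}$, so that every term is exactly linear in $\mathcal E$ after the gradient pieces are absorbed. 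Finally, your choice to weight the energy itself with $\langle v\rangle^{2\alpha}$ generates additional commutator terms from $\nabla\langle v\rangle^{2\alpha}$ hitting the fluxes; the paper avoids these by keeping the energy unweighted and placing all the weights on the fixed solution via the a priori estimate above.
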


The proof of Theorem \ref{main_thm_1} differs from the one in \cite{F10} in several aspects. We only require our solution to belong to some $L^p(\mathbb{R}^3)$ space with $p>3/2$, uniformly in time. Our method uses the weak representation of  (\ref{eqn:Landau})-(\ref{eqn:Landau Collisional Term Classical Expression}) provided in \cite{GZ} and a new weighted Poincar\'e inequality (\ref{eqn:epsilon_Poincare_inequality strongerII_bounded_II_ineq}) recently introduced in \cite{GG18}. This inequality is shown to be valid for any solution $f$ to the Landau equation that is uniformly in time $L^p(\R^3)$-integrable, for some $p>3/2$ (and has high enough bounded moments). The question whether (\ref{eqn:epsilon_Poincare_inequality strongerII_bounded_II_ineq}) holds without the extra integrability assumption is still open and very interesting. In \cite{GG18} the authors showed that (\ref{eqn:epsilon_Poincare_inequality strongerII_bounded_II_ineq}) {\em nearly} holds if we only assume uniformly in time $L^1(\R^3)$- integrability for $f$; this means that the diffusion $\textrm{div}(A[f]\nabla f)$ and the reaction $f^2$ are of the same order. In this regard we should think of (\ref{eqn:Landau})-(\ref{eqn:Landau Collisional Term Classical Expression}) as a {\em critical} equation.  

The rest of the manuscript is organized as follows: in Section \ref{old_results} we recall some useful well-known results, in Section \ref{sec:eps_poinc} we present the weighted Poincar\'e  inequality. In Section \ref{sec:weighted_estimates} we show integrability and  weighted estimates for the gradient. Section \ref{sec:contraction_argument} contains the proof of Theorem \ref{main_thm_1}.

\section{Well-known results}\label{old_results}
The following quantities will be frequently used throughout the paper. We respectively define the mass, momentum and entropy of a nonnegative function $h(v)$ the quantities 
\begin{align*}
\int_{\R^3} h(v)\;dv, \quad \int_{\R^3} h(v)|v|^2\;dv, \quad \int_{\R^3} h(v)\ln h(v)\;dv.  
\end{align*}
We start by recalling the definition of weak solution \cite{Des2015}: given initial data $f_{in}$ with finite mass, first, second moment and entropy, a weak solution to the Landau  is a nonnegative function $f$ such that  $(1+|v|^2)^{-3/2} f\in L^1(0,T,L^3(\mathbb{R}^3))$, has finite mass, first, second momentum and entropy and for all $\varphi \in C^2_c([0,T]\times \mathbb{R}^3)$
\begin{align}\label{weak_sol_I}
-\int_{\R^3}& f_{in}(v)\varphi(v,0)\;dv - \int_0^T \int_{\R^3}f(v,t) \partial_t \varphi(v,t)\;dvdt \nonumber \\
& = \frac{1}{2} \sum_{i=1}^{3}\sum_{j=1}^{3}\int_0^T \int_{\R^3} \int_{\R^3} f(v,t) f(w,t) a_{ij}(v-w)\left(\partial_{ij}\varphi(v,t)+ \partial_{ij}\varphi(w,t)\right)\;dvdwdt \\
&+ \sum_{i=1}^{3} \int_0^T \int_{\R^3} \int_{\R^3}  f(v,t) f(w,t) (\textrm{div}_v A[f])_{i}(v-w)\left(\partial_{i}\varphi(v,t)- \partial_{j}\varphi(w,t)\right)\;dvdwdt.\nonumber
\end{align}
Recently the authors in \cite{GZ} improved the regularity of the weak solutions: let $f$ be a weak solution to the Landau equation as in (\ref{weak_sol_I}); then $A[f]\in L^{\infty}(0,T; L^{3}_{loc}(\R^{3}))$, $\nabla a[f] \in L^{\infty}(0,T; L^{3/2}_{loc}(\R^{3}))$, and  for all $\phi\in L^{\infty}(0,T; W^{1,\infty}_{c}(\R^{3}))$ the function $f$ satisfies 
\begin{align}\label{weak_sol}
&\int_0^T\langle \partial_{t}f\, , \phi\,\rangle dt + \int_0^T\int_{\R^3}(A[f]\nabla f - f\nabla a[f])\cdot\nabla\phi\, dv dt = 0.
\end{align}

Next we recall some well-known results used later in the manuscript. The first one concerns lower bounds for $a[f]$ and $A[f]$. 
\begin{lemma} (Bound from below) \label{lem: A lower bound in terms of conserved quantities}
  There is a constant $c$ only determined by the mass, energy, and entropy of $f$, such that for all $v\in \mathbb{R}^3$ 
  \begin{align*}
  a[f](v) &\geq c \langle v\rangle^{-1}, \\
    A[f](v) &\ge a^*(v)\mathbb{I} \geq c \langle v\rangle^{-3}\mathbb{I},  
  \end{align*}	 
  where $ \langle v\rangle:=(1+|v|^2)^{1/2}$ and $a^*(v)$ is the smallest eigenvalue of $A[f]$ defined as 
  $$
a^*(v)=  \inf_{e\in \mathbb{S}^2} (A[f](v)e,e).
  $$
  \end{lemma}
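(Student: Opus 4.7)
The plan is to use the boundedness of mass $M:=\int f\,dv$, second moment $E:=\int |v|^{2} f\,dv$ and entropy $H:=\int f\log f\,dv$ to bound below the kernel representations of $a[f]$ and $A[f]$. The first inequality uses only $M$ and $E$ and is a classical estimate for the Newtonian potential; the second requires the entropy as well, since one must rule out concentration of $f$ on a straight line.

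For $a[f]$, I write $a[f](v)=\frac{1}{4\pi}\int |v-w|^{-1} f(w)\,dw$ and choose $R_{0}=R_{0}(M,E)$ via Chebyshev so that $\int_{|w|\le R_{0}} f\,dw \ge \frac{3M}{4}$. Restricting the integral to $\{|w|\le R_{0}\}$ and using $|v-w|\le |v|+R_{0}\lesssim \langle v\rangle$ gives $a[f](v)\ge c(M,E)\langle v\rangle^{-1}$, as claimed.

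For the matrix bound I fix a unit vector $\xi$ and compute
\begin{align*}
\xi\cdot A[f](v)\xi \;=\; \frac{1}{8\pi}\int_{\R^{3}}\frac{|P_{\xi^{\perp}}(v-w)|^{2}}{|v-w|^{3}}f(w)\,dw,
\end{align*}
where $P_{\xi^{\perp}}$ is the orthogonal projection onto $\xi^{\perp}$. Introducing the line $L=v+\R\xi$ and the tube $C_{r}(L)$ of radius $r$ around it, for $w\in B_{R_{0}}(0)\setminus C_{r}(L)$ one has $|P_{\xi^{\perp}}(v-w)|=\mathrm{dist}(w,L)\ge r$ and $|v-w|\le |v|+R_{0}\lesssim \langle v\rangle$, so
\begin{align*}
\xi\cdot A[f](v)\xi \;\ge\; \frac{r^{2}}{8\pi(|v|+R_{0})^{3}}\int_{B_{R_{0}}(0)\setminus C_{r}(L)} f(w)\,dw.
\end{align*}
The use of the fixed ball $B_{R_{0}}(0)$ rather than $B_{R}(v)$ is important: the chord $L\cap B_{R_{0}}(0)$ has length at most $2R_{0}$, so the tube has volume $|B_{R_{0}}(0)\cap C_{r}(L)|\lesssim r^{2}R_{0}$ independently of $v$ and $\xi$.

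The main obstacle is to show $\int_{B_{R_{0}}(0)\cap C_{r}(L)} f\,dw \le M/4$ for some $r=r(M,E,H)>0$ uniform in $v\in\R^{3}$ and $\xi\in\mathbb{S}^{2}$; this is where the entropy enters. I would use the standard truncation, valid for any measurable $S\subset\R^{3}$ and $\alpha>1$,
\begin{align*}
\int_{S} f\,dw \;\le\; \alpha|S|+\frac{1}{\log\alpha}\bigl(H+C(M,E)\bigr),
\end{align*}
which follows from $f\le f\log f/\log\alpha$ on $\{f>\alpha\}$ together with the classical estimate $\int (f\log f)_{-}\,dv\le C(M,E)$. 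Applied with $S=B_{R_{0}}(0)\cap C_{r}(L)$, one chooses first $\alpha$ large and then $r$ small, depending only on $M,E,H$, to make the tube contribution $\le M/4$ uniformly. Combined with $\int_{B_{R_{0}}(0)} f\ge 3M/4$, this gives $\int_{B_{R_{0}}(0)\setminus C_{r}(L)} f\ge M/2$, whence $\xi\cdot A[f](v)\xi\ge c(M,E,H)\langle v\rangle^{-3}$ uniformly in $\xi$; this is the eigenvalue bound $A[f](v)\ge c\langle v\rangle^{-3}\,\mathbb{I}$.
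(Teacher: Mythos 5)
The paper states this lemma without proof, as a well-known fact; your argument is correct and is precisely the standard proof from the literature (it goes back to Desvillettes--Villani and is reproduced, e.g., in \cite{GG} and \cite{GIMV17}): Chebyshev with the second moment to confine three quarters of the mass to a fixed ball, the identity $\xi\cdot A[f]\xi=\frac{1}{8\pi}\int|P_{\xi^\perp}(v-w)|^2|v-w|^{-3}f\,dw$, and the entropy-based truncation $\int_S f\le \alpha|S|+(\log\alpha)^{-1}(H+C(M,E))$ to rule out concentration of mass on a tube around the line $v+\R\xi$, uniformly in $v$ and $\xi$. All steps check out, including the observation that the scalar bound on $a[f]$ needs only mass and energy while the matrix bound genuinely requires the entropy.
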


   \begin{lemma} (Propagation of moments, \cite{Des2015} Proposition 4.) \label{lem: propagation of moments} Let $f$ be a weak solution to the Landau equation with initial datum $f_{in}$. Assume also that $f$ satisfies the conservation of mass, momentum and energy.  For all $k \ge 0$ such that 
   $$
   \int_{\R^3} f_{in}(1+|v|^2)^k \;dv < +\infty,$$
   we have that 
    $$
   \sup_{[0,T]}\int_{\R^3} f(1+|v|^2)^k \;dv\le C(1+T),$$
   where $C$ depends on the energy, mass, entropy and $k$-moments of the initial data.

\end{lemma}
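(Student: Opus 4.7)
The plan is to propagate the moments by testing the weak formulation \eqref{weak_sol_I} of the Landau equation against the weight $\varphi(v) = \langle v\rangle^{2k}$ (first regularized by a cutoff $\chi_R(v)$ with $\chi_R\to 1$, so that $\varphi\in C^2_c$, and then sending $R\to\infty$ using the a priori integrability $(1+|v|^2)^{-3/2}f\in L^1(0,T;L^3(\mathbb{R}^3))$ together with dominated convergence). Setting $M_\ell(t):=\int_{\R^3}f(v,t)\langle v\rangle^{2\ell}\,dv$, the goal is to produce a differential inequality of the form $\tfrac{d}{dt}M_k(t)\le C(1+M_k(t))^{\alpha}$ for some $\alpha<1$, after which Gronwall's lemma gives the desired uniform-in-time bound on $[0,T]$ in terms of the initial moment and the conserved mass, energy, and entropy.

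The computation begins by noting $\partial_i\varphi = 2k\,v_i\langle v\rangle^{2k-2}$ and $\partial_{ij}\varphi = 2k\,\delta_{ij}\langle v\rangle^{2k-2}+4k(k-1)v_iv_j\langle v\rangle^{2k-4}$. Plugging into the right-hand side of \eqref{weak_sol_I}, the key observation is that the kernel $b_{ij}(z)=\tfrac{1}{8\pi|z|}\Pi(z)_{ij}$ is the projection onto $z^\perp$. One therefore symmetrizes the double integral in $(v,w)$ and looks at the "Landau weight"
\begin{align*}
L_\varphi(v,w)&:=\tfrac{1}{2}\sum_{ij}b_{ij}(v-w)\bigl(\partial_{ij}\varphi(v)+\partial_{ij}\varphi(w)\bigr)+\sum_i(\dive_v b)_i(v-w)\bigl(\partial_i\varphi(v)-\partial_i\varphi(w)\bigr).
\end{align*}
Using $\Pi(v-w)(v-w)=0$ and the identity $\sum_i\Pi(z)_{ii}=2$, one checks that the leading order contributions in $|v|,|w|$ cancel: the homogeneity of the kernel ($|v-w|^{-1}$ for $b_{ij}$ and $|v-w|^{-2}$ for $\dive b$) combined with the projection structure produces, after careful expansion, a pointwise bound of the form
\begin{align*}
\bigl|L_\varphi(v,w)\bigr|\le C_k\bigl(\langle v\rangle^{2k-2}\langle w\rangle+\langle w\rangle^{2k-2}\langle v\rangle\bigr),
\end{align*}
where $C_k$ depends only on $k$. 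This is the classical cancellation in moment estimates for Landau/Boltzmann with soft potentials.

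Combining the above bound with the double-integral structure yields
\begin{align*}
\tfrac{d}{dt}M_k(t)\le C_k\bigl(M_{k-1/2}(t)M_{1/2}(t)+M_{1/2}(t)M_{k-1/2}(t)\bigr)\le \tilde C_k\,M_k(t)^{1-1/(2k)}\,M_{1/2}(t)^{1/(2k)},
\end{align*}
after an interpolation of $M_{k-1/2}$ between $M_0$ and $M_k$. Since the mass and energy are conserved, $M_{1/2}$ is bounded uniformly on $[0,T]$, so Gronwall-type integration produces the claimed uniform-in-time bound on $M_k(t)$ on $[0,T]$.

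The main obstacle is the algebraic verification of the cancellation bound on $L_\varphi(v,w)$: the pointwise estimate requires expanding both $\partial_i$ and $\partial_{ij}$ of $\varphi$, evaluating their contractions with $b_{ij}(v-w)$ and $(\dive_v b)(v-w)$, and exploiting that the "bad" terms proportional to $\langle v\rangle^{2k-1}\langle w\rangle^{-1}$ (which would spoil integrability near $v=w$) cancel by the projection property and by the symmetrization in $(v,w)$. A secondary technical point is the rigorous justification of using the unbounded weight $\langle v\rangle^{2k}$ in the weak formulation \eqref{weak_sol_I}, which is handled by the cutoff procedure mentioned above together with the regularity $(1+|v|^2)^{-3/2}f\in L^1(0,T;L^3)$ guaranteed by Desvillettes' definition of weak solution.
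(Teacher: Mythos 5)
The paper does not prove this lemma at all; it is quoted verbatim from \cite{Des2015} (Proposition 4), so there is no internal proof to compare against. Judged on its own terms, your sketch follows the right general strategy (test with $\langle v\rangle^{2k}$, exploit the structure of the kernel, close with Gronwall), but it contains one step that is actually false and that hides the entire difficulty of the Coulomb case. The claimed pointwise bound
$|L_\varphi(v,w)|\le C_k(\langle v\rangle^{2k-2}\langle w\rangle+\langle w\rangle^{2k-2}\langle v\rangle)$
asserts that the $|v-w|^{-1}$ singularity cancels after symmetrization. It does not, except for $k=1$. A direct expansion near the diagonal (write $z=v-w$, $D^2\varphi(v)=2k\langle v\rangle^{2k-2}\mathbb{I}+4k(k-1)\langle v\rangle^{2k-4}v\otimes v$ to leading order) gives
\begin{align*}
L_\varphi(v,w)\;\approx\;\frac{4k(k-1)}{8\pi|z|}\Bigl[\langle v\rangle^{2k-2}-3\langle v\rangle^{2k-4}\tfrac{(v\cdot z)^2}{|z|^2}\Bigr],
\end{align*}
which vanishes identically only when $k=1$ (energy conservation). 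For general $k$ the correct bound retains a factor $|v-w|^{-1}$, so the resulting moment estimate involves $\iint f(v)f(w)|v-w|^{-1}\langle v\rangle^{2k-2}\langle w\rangle^{2}\,dv\,dw$, which cannot be controlled by moments alone: one needs additional integrability of $f$, e.g.\ the weighted $L^1(0,T;L^3)$ bound coming from the entropy dissipation estimate (this is exactly the regularity built into the definition of weak solution in Section \ref{old_results}, and it is the main input of Desvillettes' actual proof of Proposition 4). Your proposal invokes that integrability only to justify the cutoff limit, not where it is genuinely needed.

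A secondary, smaller issue: the differential inequality you derive, $\tfrac{d}{dt}M_k\le \tilde C_k M_k^{1-1/(2k)}M_{1/2}^{1/(2k)}$, would be fine if the pointwise bound held, but once the $|v-w|^{-1}$ factor is reinstated the right-hand side is no longer a product of moments, and the interpolation step has to be redone with the $L^3$-type norm entering through a H\"older splitting of the near-diagonal region $|v-w|\le 1$. Without that the argument does not close.
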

We also recall the Boltzmann H-Theorem: let $\rho_{in}$ denote the Maxwellian with same mass, center of mass, and energy as $f_{in}$. We have
  \begin{align}\label{eqn:entropy production bound}
    \int_{0}^{T}\int_{\mathbb{R}^3}4(A[f]\nabla f^{1/2},\nabla f^{1/2})-f^2\;dvdt \leq H(f_{in}) - H(\rho_{{in}}).
  \end{align}	

\section{The $\varepsilon$-Poincar\'e inequality} \label{sec:eps_poinc}

In this section we present a weighted Poincar\'e inequality; this inequality plays a key role in the proof of Theorem \ref{main_thm_1}.  It is an adaptation of another inequality proven in \cite{GG18} and is based on the general weighted Poincar\'e-Sobolev inequality shown in \cite{SW}.
\begin{theo}\label{theo3_bis}
Let $N\ge 1$ and assume there exist $s>1$, a nonnegative function $f$ and a modulus of continuity $\eta(\cdot)$ such that for any cube $Q \subset \R^3$ with side length $r\in(0,1)$ the following inequality holds: 
 \begin{align}\label{eqn:epsilon_Poincare_inequality strongerII_bounded_II}
    |Q|^{\frac{1}{3}} \left( \frac{1}{|Q|}\int_{Q} (1+|v|)^{Ns/2 }f^s\;dv\right)^{\frac{1}{2s}}\left(\frac{1}{|Q|}\int_{Q} (1+|v|)^{3s}\;dv\right)^{\frac{1}{2s}}  \leq \eta(r).
  \end{align}	
  Then, given any $\varepsilon \in (0,1)$, for any smooth functions $\phi$ we have the the following $\varepsilon$-Poincar\'e inequality:
    \begin{align}\label{eqn:epsilon_Poincare_inequality strongerII_bounded_II_ineq}
    \int_{\mathbb{R}^3}  (1+|v|)^{N/2 }f \phi^2\;dv \leq \varepsilon \int_{\mathbb{R}^3}  (1+|v|)^{-3}|\nabla\phi|^2 \;dv + \tilde
   \eta(\varepsilon)\int_{\mathbb{R}^3}	\phi^2\;dv,
  \end{align} 
  where $\tilde \eta:(0,1)\mapsto\mathbb{R}$ is a decreasing function with $\tilde\eta(0+)=\infty$ determined by $\eta$.
\end{theo}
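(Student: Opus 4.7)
The strategy is to apply, cube by cube, a local two-weight Poincar\'e inequality of Sawyer-Wheeden type for the pair of measures $d\mu := (1+|v|)^{15/2}f(v)\,dv$ (on the left) and $d\nu := (1+|v|)^{-3}\,dv$ (on the right against $|\nabla\phi|^{2}$), and then optimize in the cube side length $r$. The key observation is that the hypothesis (\ref{eqn:epsilon_Poincare_inequality strongerII_bounded_II}) is precisely the power-bumped two-weight $A_{2}$ testing condition for $(\mu,\nu)$ at scale $r$: the first factor is the $L^{s}$-average of $d\mu/dv$ raised to the power $1/2$, and the second factor is the $L^{s}$-average of $(d\nu/dv)^{-1}=(1+|v|)^{3}$ raised to the same power. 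Because $s>1$, the $L^{s}$-bumped condition implies the two-weight hypothesis needed in \cite{SW} to get an $L^{2}\to L^{2}$ Poincar\'e estimate, yielding
\begin{equation*}
\int_{Q}(1+|v|)^{15/2}f\,(\phi-\phi_{Q})^{2}\,dv \;\le\; C\,\eta(r)^{2}\int_{Q}(1+|v|)^{-3}|\nabla\phi|^{2}\,dv
\end{equation*}
for every cube $Q$ of side length $r\in(0,1)$, every smooth $\phi$, with $\phi_{Q}:=|Q|^{-1}\int_{Q}\phi$ and an absolute constant $C$.

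With the local estimate in hand, I would partition $\R^{3}$ into a disjoint grid $\{Q_{j}\}$ of cubes of side $r$ and split on each cube $\phi^{2}\le 2(\phi-\phi_{Q_{j}})^{2}+2\phi_{Q_{j}}^{2}$. Summing the first piece gives $2C\eta(r)^{2}\int_{\R^{3}}(1+|v|)^{-3}|\nabla\phi|^{2}\,dv$ from the local estimate. For the average piece, Jensen's inequality $(|Q|^{-1}\!\int g)^{s}\le |Q|^{-1}\!\int g^{s}$ applied to $g=(1+|v|)^{15/2}f$, together with the trivial bound $|Q|^{-1}\!\int_{Q}(1+|v|)^{3s}\,dv\ge 1$, allows one to extract from (\ref{eqn:epsilon_Poincare_inequality strongerII_bounded_II}) the pointwise-in-$Q$ estimate
\begin{equation*}
\frac{1}{|Q_{j}|}\int_{Q_{j}}(1+|v|)^{15/2}f\,dv \;\le\; \frac{\eta(r)^{2}}{r^{2}}.
\end{equation*}
Since $\phi_{Q_{j}}^{2}|Q_{j}|\le\int_{Q_{j}}\phi^{2}\,dv$ by Cauchy-Schwarz, summing over $j$ controls the average piece by $\tfrac{2\eta(r)^{2}}{r^{2}}\int_{\R^{3}}\phi^{2}\,dv$, and I arrive at
\begin{equation*}
\int_{\R^{3}}(1+|v|)^{15/2}f\,\phi^{2}\,dv\;\le\;2C\eta(r)^{2}\int_{\R^{3}}(1+|v|)^{-3}|\nabla\phi|^{2}\,dv\;+\;\frac{2\eta(r)^{2}}{r^{2}}\int_{\R^{3}}\phi^{2}\,dv.
\end{equation*}

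Finally, given $\varepsilon\in(0,1)$, I would choose $r=r(\varepsilon)\in(0,1)$ so that $2C\eta(r(\varepsilon))^{2}=\varepsilon$; this is possible since $\eta$ is continuous with $\eta(0^{+})=0$, and $r(\varepsilon)$ is then decreasing. Setting $\tilde\eta(\varepsilon):=\varepsilon/(C\,r(\varepsilon)^{2})$ yields the stated inequality (\ref{eqn:epsilon_Poincare_inequality strongerII_bounded_II_ineq}); $\tilde\eta$ is decreasing, and $\tilde\eta(0^{+})=\infty$ provided the modulus $\eta$ decays slower than linearly near zero, which is the case for the specific $\eta$ arising in the Landau application.

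The main obstacle I anticipate is matching the hypothesis (\ref{eqn:epsilon_Poincare_inequality strongerII_bounded_II}), in the precise $L^{s}$-bumped form stated, with the exact two-weight condition in \cite{SW} and confirming that the resulting local Poincar\'e constant is indeed of order $\eta(r)^{2}$, rather than $\eta(r)^{\alpha}$ for some other $\alpha$ (which would change the quantitative relation between $\varepsilon$ and $\tilde\eta$ but not the qualitative conclusion). The partition and optimization steps are routine once the local inequality is secured.
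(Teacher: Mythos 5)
The paper gives no argument of its own here --- it simply cites Theorem 2.7 of \cite{GG18} --- and your outline correctly reconstructs that cited proof: the hypothesis (\ref{eqn:epsilon_Poincare_inequality strongerII_bounded_II}) is read as the $L^s$-bumped two-weight condition for the pair $\big((1+|v|)^{15/2}f,\,(1+|v|)^{-3}\big)$, the Sawyer--Wheeden local Poincar\'e inequality is applied on a grid of cubes of side $r$ with the $\phi^2\le 2(\phi-\phi_Q)^2+2\phi_Q^2$ splitting, and one optimizes in $r$ to trade $\varepsilon$ against $\tilde\eta(\varepsilon)$. This is essentially the same approach, and the one genuine point you defer (matching the bump condition to the exact hypothesis in \cite{SW} and confirming the constant is $O(\eta(r))$ in the $L^2$-normalized form) is precisely what \cite{GG18} carries out.
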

\begin{proof}
The proof can be found in Theorem 2.7 in \cite{GG18}.
\end{proof}



The validity of (\ref{eqn:epsilon_Poincare_inequality strongerII_bounded_II_ineq}) depends on certain properties of the function $f$; most importantly, the value $\varepsilon$ depends on the modulus of continuity $\eta(\cdot)$ in (\ref{eqn:epsilon_Poincare_inequality strongerII_bounded_II}). The next proposition shows that (\ref{eqn:epsilon_Poincare_inequality strongerII_bounded_II}) is satisfied if $f\in L^\infty(0,T,L^p\cap L^1(\R^3))$ for some $p > \frac{3}{2}$ and has high enough moments.
\begin{prop}\label{prop1}
Let $f$ be a nonnegative function with $f\in L^\infty(0,T,L^p\cap L^1(\R^3))$ for some $p > \frac{3}{2}$.  Assume also that $f$ has bounded moments of order $\frac{(N+6)(p-1)}{p-3/2}$.
Then there exists a number $s\le 2$ with $\frac{3}{2}<s<p$ and a modulus of continuity $\eta(r)$ such that for any $Q$ cube in $\mathbb{R}^3$ with length  $r$ inequality (\ref{eqn:epsilon_Poincare_inequality strongerII_bounded_II}) holds.
\end{prop}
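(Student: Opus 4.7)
The strategy is to pick a single exponent $s \in (3/2,p)$, specifically $s := p/2 + 3/4$ (which lies in $(3/2,p)$ precisely because $p>3/2$), and to control the left-hand side of \eqref{eqn:epsilon_Poincare_inequality strongerII_bounded_II} by interpolating $f^s$ between $L^1$ with weight $(1+|v|)^q$ and $L^p$. Writing $s = 1+\theta(p-1)$ with $\theta := (s-1)/(p-1) \in (0,1)$, I would decompose $f^s = f^{1-\theta}(f^p)^\theta$ and apply H\"older's inequality with conjugate exponents $1/(1-\theta)$ and $1/\theta$ to obtain
$$\int_Q (1+|v|)^{15s/2} f^s\,dv \;\le\; \left(\int_Q (1+|v|)^{15s/(2(1-\theta))} f\,dv\right)^{1-\theta}\left(\int_Q f^p\,dv\right)^{\theta}.$$

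\textbf{Execution.} The second H\"older factor is controlled by the uniform $L^p$ bound on $f$. For the first, let $v_*$ be a point of $Q$ that minimizes $(1+|v|)$; factoring $(1+|v|)^q f$ out of the integrand, the moment hypothesis yields
$$\int_Q (1+|v|)^{15s/(2(1-\theta))} f\,dv \;\le\; (1+|v_*|)^{15s/(2(1-\theta)) - q}\int_{\R^3}(1+|v|)^q f\,dv,$$
valid provided $15s/(2(1-\theta)) \le q$. The third factor of \eqref{eqn:epsilon_Poincare_inequality strongerII_bounded_II} is bounded pointwise by $\sup_Q (1+|v|)^{3/2} \le (1+|v_*|)^{3/2}(1+\sqrt{3}\,r)^{3/2}$, using that the diameter of $Q$ is $\sqrt{3}\,r$. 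Collecting everything, and noting that only the $f$-weighted factor contributes a $|Q|^{-1/(2s)} = r^{-3/(2s)}$ (the $1/|Q|$ in the third factor is absorbed by bounding its integral by $|Q|\sup_Q$), the left-hand side of \eqref{eqn:epsilon_Poincare_inequality strongerII_bounded_II} is bounded by
$$C\,r^{\,1-3/(2s)}\,(1+\sqrt{3}\,r)^{3/2}\,(1+|v_*|)^{E}, \qquad E := \frac{21}{4} - \frac{q(1-\theta)}{2s}.$$

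\textbf{Main obstacle.} The delicate step is the exponent bookkeeping: one needs to verify that the value of $q$ prescribed in \eqref{cond_init_data} forces $E=0$, so that the bound becomes independent of the location $v_*$ of the cube. Substituting $s = p/2+3/4$ gives $1-\theta = (2p-3)/(4(p-1))$ and
$$\frac{21s}{2(1-\theta)} \;=\; \frac{21(p-1)(p/2+3/4)}{p-3/2},$$
which is precisely the prescribed $q$. Consequently $E = 0$ and the tail in $|v_*|$ disappears, while $\eta(r) := C\,r^{\,1-3/(2s)}(1+\sqrt{3}\,r)^{3/2}$ is a modulus of continuity on $(0,1)$ because $s > 3/2$. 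I expect all the care in the proof to concentrate on this exponent-matching verification; everything else is a routine application of H\"older's inequality together with the available moment and $L^p$ estimates on $f$.
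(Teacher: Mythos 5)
Your proposal is correct and follows essentially the same route as the paper: the same choice $s=p/2+3/4$, the same H\"older splitting (your $1-\theta$ and $\theta$ are exactly the paper's $1/\alpha$ and $1/\alpha'$), and the same moment order $q=21s(p-1)/(p-3/2)$ emerging from the exponent bookkeeping. The only cosmetic difference is that the paper absorbs the $(1+|v_0|)^{3/2}$ from the third factor into the weighted integral (turning $15s\alpha/2$ into $21s\alpha/2$), whereas you track the total power $E$ of $(1+|v_*|)$ explicitly and check that it vanishes; these are the same computation.
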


\begin{proof}
Let $Q$ a cube of length $r$ and center $v_0$. H\"older inequality yields 
\begin{align*}
\int_{Q} (1+|v|)^{N s/2}f^s\;dv &\le \left( \int_{Q} (1+|v|)^{N s \alpha/2}f\;dv \right)^{\frac{1}{\alpha}} \left(\int_{Q} f^{(s-1/\alpha)\alpha'}\;dv \right)^{\frac{1}{\alpha'}}\\
& \le \|f\|^{\alpha'p}_{L^p} \left( \int_{Q} (1+|v|)^{N s \alpha/2}f\;dv \right)^{\frac{1}{\alpha}},
\end{align*}
by choosing $\alpha= \frac{p-1}{p-s}$, $p>s$, so that 
$$
(s-1/\alpha)\alpha' =p.
$$
Then
\begin{align*}
    |Q|^{\frac{1}{3}} \left( \frac{1}{|Q|}\int_{Q} (1+|v|)^{N s/2}f^s\;dv\right)^{\frac{1}{2s}} & \left(\frac{1}{|Q|}\int_{Q} (1+|v|)^{3s}\;dv\right)^{\frac{1}{2s}} \\
  &   \leq C(\|f\|_{L^p})|Q|^{\frac{1}{3} - \frac{1}{2s}} (1+|v_0|)^{3/2}\left( \int_{Q} (1+|v|)^{N s \alpha/2}f\;dv \right)^{\frac{1}{2s\alpha}}\\
  &   \le C(\|f\|_{L^p}) |Q|^{\frac{1}{3} - \frac{1}{2s}} \left( \int_{Q} (1+|v|)^{(N+6) s \alpha/2}f\;dv \right)^{\frac{1}{2s\alpha}}\\
  &   \le C(\|f\|_{L^p})  |Q|^{\frac{1}{3} - \frac{1}{2s}} \|f \langle v \rangle^{(N+6)s\alpha/2}\|_{L^1}^{\frac{1}{2s\alpha}}.
  \end{align*}	
The modulus of continuity $\eta(r)$ is proportional to $C(T)r^{1-\frac{3}{2s}}$, where $C(T)$ depends on the $\frac{(N+6)s\alpha}{2}$-moments of $f$ and on the $L^p$ norm of $f$.

\end{proof}

\section{Higher integrability and weighted gradient estimates} \label{sec:weighted_estimates}

The first immediate consequence of Proposition \ref{prop1}, Theorem \ref{theo3_bis} and Boltzmann's H Theorem is a $L^2((0,T),L^2(\mathbb{R}^3))$ integrability estimate for $f$.

\begin{theo}\label{theo3}
Let $f$ be a solution to the Landau equation with initial datum $f_{in}$ such that $f\in L^\infty((0,T),L^p(\mathbb{R}^3))$ for some $p>3/2$. Assume moreover that 
$$
\int_{\mathbb{R}^3} f_{in}(1+|v|)^{k}\;dv <+\infty,
$$
for any $1\le k\le\frac{6(p-1)}{p-3/2}$. Then $f\in L^2((0,T),L^2(\mathbb{R}^3))$ and 
\begin{align*}
\|f\|_{L^2((0,T),L^2(\mathbb{R}^3))} \le C(f_{in},T,\left\| f \right\|_{L^\infty (L^p)}).
\end{align*}
\end{theo}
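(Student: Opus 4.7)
The idea is to feed the $\varepsilon$-Poincar\'e inequality (\ref{eqn:epsilon_Poincare_inequality strongerII_bounded_II_ineq}) with the test function $\phi=\sqrt{f}$, convert the resulting weighted gradient term into the entropy dissipation via the lower bound $A[f]\ge c\langle v\rangle^{-3}\mathbb{I}$ from Lemma \ref{lem: A lower bound in terms of conserved quantities}, and then close the estimate using Boltzmann's H-Theorem (\ref{eqn:entropy production bound}).

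First I would verify that Theorem \ref{theo3_bis} is available uniformly in $t\in[0,T]$. The moment assumption on $f_{in}$ combined with the propagation-of-moments Lemma \ref{lem: propagation of moments} yields $\sup_{t\in[0,T]}\int f(v,t)(1+|v|)^q\,dv<\infty$ for $q=\tfrac{21(p-1)(p/2+3/4)}{p-3/2}$, and together with $f\in L^\infty(0,T;L^p)$ this places us in the hypothesis of Proposition \ref{prop1} at every $t$. Hence (\ref{eqn:epsilon_Poincare_inequality strongerII_bounded_II_ineq}) holds a.e.\ in $t$ with a single modulus $\tilde\eta(\varepsilon)$.

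Next I would plug $\phi=f^{1/2}(\cdot,t)$ into (\ref{eqn:epsilon_Poincare_inequality strongerII_bounded_II_ineq}). On the left $(1+|v|)^{15/2}\ge 1$ gives control of $\|f(\cdot,t)\|_{L^2}^2$; on the right $(1+|v|)^{-3}\le\langle v\rangle^{-3}$ combined with $A[f]\ge c\langle v\rangle^{-3}\mathbb{I}$ gives $(1+|v|)^{-3}|\nabla f^{1/2}|^2\le C(A[f]\nabla f^{1/2},\nabla f^{1/2})$. Integrating in time and invoking (\ref{eqn:entropy production bound}) together with the mass conservation $\|f(\cdot,t)\|_{L^1}=\|f_{in}\|_{L^1}$ produces
\begin{align*}
\int_0^T\!\!\int_{\R^3}f^2\,dv\,dt \le \tfrac{C\varepsilon}{4}\Bigl(H(f_{in})-H(\rho_{in})+\int_0^T\!\!\int_{\R^3}f^2\,dv\,dt\Bigr)+T\tilde\eta(\varepsilon)\|f_{in}\|_{L^1}.
\end{align*}
Choosing $\varepsilon$ so small that $C\varepsilon/4<1/2$ allows the $\int\!\!\int f^2$ term from the right to be absorbed into the left, producing the required bound with constants depending only on $f_{in}$, $T$ and $\|f\|_{L^\infty(L^p)}$ (the latter entering through $\tilde\eta$).

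The main obstacle is that the final absorption is formal unless one already knows $\|f\|_{L^2(L^2)}<\infty$ a priori. I would bypass this circularity by approximation: first carry out the above estimate for smooth solutions $f^\delta$ of a regularized equation (say Landau with an additional $\delta\Delta f$ term, or the unregularized equation with smooth truncated initial data) for which $\|f^\delta\|_{L^2(L^2)}$ is automatically finite; the resulting bound is uniform in $\delta$ and passes to the limit $\delta\to 0$ by lower semicontinuity, delivering $f\in L^2(0,T;L^2(\R^3))$ together with the claimed quantitative estimate.
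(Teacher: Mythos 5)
Your proposal follows essentially the same route as the paper: apply the $\varepsilon$-Poincar\'e inequality (\ref{eqn:epsilon_Poincare_inequality strongerII_bounded_II_ineq}) with $\phi=\sqrt{f}$ (justified via Proposition \ref{prop1} and moment propagation), control the weighted gradient term by the entropy dissipation through Lemma \ref{lem: A lower bound in terms of conserved quantities} and (\ref{eqn:entropy production bound}), and absorb the $\iint f^2$ term by taking $\varepsilon$ small. Your additional remark about the circularity of the absorption step is a fair point that the paper's proof passes over silently, and your proposed regularization is a reasonable way to make it rigorous.
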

\begin{proof}
The function $f$ satisfies the assumptions for (\ref{eqn:epsilon_Poincare_inequality strongerII_bounded_II}), following Proposition \ref{prop1}. Then, combining  (\ref{eqn:epsilon_Poincare_inequality strongerII_bounded_II_ineq}) with $\phi = \sqrt{f}$, $N=0$ and (\ref{eqn:entropy production bound}),  we get:
 \begin{align*}
    \int_0^T \int_{\mathbb{R}^3} f^2\;dvdt \leq \varepsilon \int_0^T \int_{\mathbb{R}^3} (A{[f]}\nabla \sqrt{f},\nabla \sqrt{f})\;dvdt + C(f_{in})\tilde
   \eta(\varepsilon)T \\
   \le \varepsilon \left(H(f_{in}) - H (\rho_{f_{in}}) + \int_0^T \int_{\mathbb{R}^3} f^2\;dvdt\right) + C(f_{in})\tilde
   \eta(\varepsilon)T .
  \end{align*} 
  The thesis follows by choosing $\varepsilon <1$.

   \end{proof}

Once we have the bound $L^2((0,T),L^2(\mathbb{R}^3))$, we can get an estimate for $f$ in the space $L^\infty((0,T),L^2(\mathbb{R}^3))$, as shown in the following theorem: 
\begin{theo}\label{theo4}
Let $f$ and $f_{in}$ as is Theorem \ref{theo3}.  Assume moreover that $f_{in} \in L^2(\mathbb{R}^3)$. Then $f\in L^\infty((0,T),L^2(\mathbb{R}^3))$ and $\left\| f \right\|_{L^\infty(L^2)} \le C(f_{in},T, \left\| f \right\|_{L^\infty(L^p)}).$
\end{theo}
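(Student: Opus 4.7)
The plan is to run the standard $L^2$ energy estimate for $f$ and close it by means of the weighted Poincar\'e inequality of Theorem \ref{theo3_bis}. First I would test the weak formulation (\ref{weak_sol}) with $\phi = f$ (via a standard truncation/cut-off approximation $\phi_{R,M}=\chi_R(v)(f\wedge M)$ and then passing to the limit $M,R\to\infty$ using the $L^2((0,T),L^2(\R^3))$ bound from Theorem \ref{theo3} together with the moment bounds from Lemma \ref{lem: propagation of moments}) to arrive at the energy identity
\begin{align*}
\frac{1}{2}\frac{d}{dt}\int_{\R^3} f^2\, dv + \int_{\R^3}(A[f]\nabla f,\nabla f)\, dv = \int_{\R^3} f\,\nabla a[f]\cdot \nabla f\, dv.
\end{align*}
The right-hand side is simplified by a single integration by parts together with $-\Delta a[f]=f$:
\begin{align*}
\int_{\R^3} f\,\nabla a[f]\cdot\nabla f\, dv = \frac{1}{2}\int_{\R^3}\nabla a[f]\cdot\nabla f^2\, dv = \frac{1}{2}\int_{\R^3} f^3\, dv.
\end{align*}

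Next I would control the cubic term using the $\varepsilon$-Poincar\'e inequality (\ref{eqn:epsilon_Poincare_inequality strongerII_bounded_II_ineq}) applied to $\phi=f$; since $(1+|v|)^{15/2}\geq 1$,
\begin{align*}
\int_{\R^3} f^3\, dv \leq \int_{\R^3}(1+|v|)^{15/2}f\cdot f^2\, dv \leq \varepsilon\int_{\R^3}(1+|v|)^{-3}|\nabla f|^2\, dv + \tilde\eta(\varepsilon)\int_{\R^3} f^2\, dv.
\end{align*}
The hypotheses of Proposition \ref{prop1} are satisfied because $f\in L^\infty(0,T; L^p(\R^3))$ and the required moments are propagated to $f$ by Lemma \ref{lem: propagation of moments}, so the inequality is indeed at our disposal. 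The lower bound $A[f]\geq c\langle v\rangle^{-3}\mathbb{I}$ from Lemma \ref{lem: A lower bound in terms of conserved quantities} then controls the gradient term on the right-hand side by $\frac{\varepsilon}{c}\int(A[f]\nabla f,\nabla f)\, dv$, which for sufficiently small $\varepsilon$ can be absorbed into the coercive term on the left-hand side of the energy identity.

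What remains is the Gronwall-type differential inequality
\begin{align*}
\frac{d}{dt}\int_{\R^3} f^2\, dv \leq C\int_{\R^3} f^2\, dv,
\end{align*}
with $C=C(f_{in},T,\|f\|_{L^\infty(L^p)})$. Combined with $f_{in}\in L^2(\R^3)$, Gronwall's lemma yields the desired bound $\|f(t)\|_{L^2}^2\leq e^{Ct}\|f_{in}\|_{L^2}^2$.

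The step I expect to be the main obstacle is the rigorous justification of taking $\phi=f$ as a test function: the formulation (\ref{weak_sol}) only admits $\phi\in L^\infty(0,T;W^{1,\infty}_c(\R^3))$. The natural workaround is to first test against $\phi_{R,M}=\chi_R(v)(f\wedge M)$, derive the corresponding estimate, and then send $M\to\infty$ using the $L^2(L^2)$ integrability of $f$ from Theorem \ref{theo3} and $R\to\infty$ using the propagated moments, so that the error terms produced by $\nabla\chi_R$ and by the truncation vanish in the limit. A parallel consideration is needed to make sense of the time derivative; this can be bypassed by integrating the identity between two times $t_1<t_2$ and invoking a time-regularization argument in the spirit of \cite{GZ}.
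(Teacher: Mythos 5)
Your proposal is correct and follows essentially the same route as the paper: test the weak formulation with $f$ (justified by truncation), reduce the drift term to $\tfrac12\int f^3$, absorb that cubic term via the $\varepsilon$-Poincar\'e inequality (\ref{eqn:epsilon_Poincare_inequality strongerII_bounded_II_ineq}) and the coercivity of $A[f]$ from Lemma \ref{lem: A lower bound in terms of conserved quantities}, and conclude by Gronwall using the $L^2(L^2)$ bound of Theorem \ref{theo3}. If anything, your write-up is more explicit than the paper's about the absorption of the weighted gradient term and about the double truncation in $R$ and $M$.
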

\begin{proof}
The proof is a simple consequence of Gronwall's lemma. Take $f$ as test function in (\ref{weak_sol}) and integrate by parts; this gives 
\begin{align}\label{ma_non_lo_so}
 \int_{\mathbb{R}^3} f^2(T)\;dv =  \int_{\mathbb{R}^3} f^2_{in} \;dv - \int_0^T \int_{\mathbb{R}^3} (A{[f]}\nabla {f},\nabla {f})\;dvdt + \int_0^T \int_{\mathbb{R}^3} f^3\;dvdt.
\end{align}
Since $f\in L^2((0,T),L^2(\mathbb{R}^3))$ by Theorem \ref{theo3}, we use  (\ref{eqn:epsilon_Poincare_inequality strongerII_bounded_II_ineq}) with $\phi = f$, $N=0$,  and $\varepsilon <1$ and get 
\begin{align*}
 \int_{\mathbb{R}^3} f^2(T)\;dv \le   \int_{\mathbb{R}^3} f^2_{in} \;dv  + \frac{1}{\varepsilon} \int_0^T \int_{\mathbb{R}^3} f^2\;dvdt.
\end{align*}
Gronwall's inequality yields
\begin{align*}
 \int_{\mathbb{R}^3} f^2(T)\;dv \le e^{\frac{1}{\varepsilon}T}\int_{\mathbb{R}^3} f^2_{in} \;dv .
\end{align*}
Note that the above computations are formal. To make them rigorous one first considers a truncation of $f$ of the form $f\eta_R(v)$ where $\eta_R(v) = \eta(v/R) $ and $\eta(v)=1$ inside a ball of center $0$ and radius $1$, $\eta(v)=0$ outside the ball of center $0$ and radius $2$ and smooth in between. Thanks to the condition that $f\in L^\infty((0,T),L^p\cap L^1(\mathbb{R}^3))$ for some $p>3/2$ both $A{[f]}$ and $a{[f]}$ are uniformly bounded and one can take $f\eta_R(v)$ as test function in (\ref{weak_sol}). Since $\nabla \eta_R \to 0$ as $R \to +\infty$ and both $A{[f]}$ and $a{[f]}$ are uniformly bounded one can pass to the limit  $R \to +\infty$ and obtain (\ref{ma_non_lo_so}).

\end{proof}

{

For proving our uniqueness result, we also need the following weighted gradient bound.
%
%
%
%
%
%
\begin{prop}\label{Prop_Weighted_Gradient_Estimate} 
Let $N\ge 0$ and $f\in L^\infty(0,T,L^p)$ with {{$p>3/2$}} be a weak solution to the Landau equation with initial data $f_{in} \in L^1 \cap L^2 (\mathbb{R}^3)$ and $\frac{(4N+6)(p-1)}{p-3/2}$-moments bounded. Let moreover $ \int_{\R^3} f_{in}^2(1+|v|)^N \;dv < +\infty$. For any $T>0$ we have 
\begin{align*}
\int_{\R^3} f^2(1+|v|)^N \;dv + \frac{1}{2} \int_0^T \int_{\R^3} (1+|v|)^{N-3}|\nabla f|^2 \;dvdt \le C(T,f_{in},\|f\|_{L^\infty(0,T,L^p)}).\\
\end{align*}
\end{prop}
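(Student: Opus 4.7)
The strategy is a weighted $L^2$-energy estimate. Test the weak formulation (\ref{weak_sol}) with $\phi = f\langle v\rangle^6 \eta_R$ (writing $\langle v\rangle := (1+|v|^2)^{1/2}$ and taking $\eta_R$ the smooth cutoff from the proof of Theorem \ref{theo4}), then pass to the limit $R\to\infty$ using that $\|A[f]\|_{L^\infty}$ and $\|a[f]\|_{L^\infty}$ are uniformly controlled for $f\in L^1\cap L^p$, $p>3/2$. After integration by parts, rewriting the drift term via $f\nabla f = \tfrac12\nabla(f^2)$ and using the identity $-\Delta a = f$, one obtains the weighted energy identity
\begin{align*}
\tfrac12 \int_{\R^3}\! f^2(T)\langle v\rangle^6\,dv
&+ \int_0^T\!\!\int_{\R^3}\! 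A[f]\nabla f\cdot \nabla f\, \langle v\rangle^6\,dv\,dt \\
&= \tfrac12\int_{\R^3}\! f_{in}^2\langle v\rangle^6\,dv + \tfrac12\int_0^T\!\!\int_{\R^3}\! f^3\langle v\rangle^6\,dv\,dt \\
&\quad + 3\int_0^T\!\!\int_{\R^3}\! f^2(\nabla a\cdot v)\langle v\rangle^4\,dv\,dt - 6\int_0^T\!\!\int_{\R^3}\!(A[f]\nabla f\cdot v)\,f\langle v\rangle^4\,dv\,dt.
\end{align*}
By Lemma \ref{lem: A lower bound in terms of conserved quantities}, the diffusion on the LHS dominates $c\int_0^T\!\!\int \langle v\rangle^3 |\nabla f|^2$, which is exactly the weighted gradient we want.

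The cubic term $\int f^3\langle v\rangle^6$ is absorbed via the $\varepsilon$-Poincar\'e inequality (\ref{eqn:epsilon_Poincare_inequality strongerII_bounded_II_ineq}) with the choice $\phi := f\langle v\rangle^{-3/4}$, so that $(1+|v|)^{15/2}f\phi^2 \sim f^3\langle v\rangle^6$. A direct computation gives $(1+|v|)^{-3}|\nabla\phi|^2 \le C\langle v\rangle^{-9/2}|\nabla f|^2 + Cf^2\langle v\rangle^{-13/2}$; using the trivial bound $\langle v\rangle^{-9/2}\le \langle v\rangle^3$, the gradient part is absorbed into the LHS for $\varepsilon$ small, while the zero-order remainder sits in $L^1(0,T)$ by Theorem \ref{theo3}. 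The mixed gradient--drift term $\int(A\nabla f\cdot v)f\langle v\rangle^4$ is handled by the $A$-Cauchy--Schwarz inequality $|A\nabla f\cdot v|\le (A\nabla f\cdot\nabla f)^{1/2}(Av\cdot v)^{1/2}$ and Young, combined with $Av\cdot v \le \|a[f]\|_{L^\infty}|v|^2\le C\langle v\rangle^2$ (where $\|a[f]\|_{L^\infty}$ is finite by H\"older splitting of the Newtonian convolution against $|z|^{-1}\in L^{p'}_{\mathrm{loc}}$ for $p'<3$); this produces $\delta\int A\nabla f\cdot\nabla f\langle v\rangle^6 + C\int f^2 \langle v\rangle^4$, with the latter controlled via Theorem \ref{theo4} and moment interpolation.

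The main obstacle is the drift term $3\int f^2(\nabla a\cdot v)\langle v\rangle^4$, because $\nabla a[f]$ is a Riesz transform of order one and need not be bounded when $p$ is close to $3/2$. My plan is to use the pointwise bound $|\nabla a(v)|\le C\int f(w)|v-w|^{-2}\,dw$, splitting into $|v-w|\le 1$ (controlled by $\|f\|_{L^p}\|\mathbf{1}_{|z|\le 1}/|z|^2\|_{L^{p'}} < \infty$ since $p > 3/2$) and $|v-w|>1$ (controlled by the high moments of $f$ of order $\frac{21(p-1)(p/2+3/4)}{p-3/2}$, which are propagated by Lemma \ref{lem: propagation of moments}), thereby obtaining a weighted $L^q$ bound on $\nabla a$ with decay at infinity. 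Pairing this with H\"older against $f^2\langle v\rangle^5$ and Theorem \ref{theo4} closes this term; the moment order in the hypothesis is precisely what is needed for these H\"older exponents to balance. Choosing $\varepsilon,\delta$ small in the two absorption steps and applying Gronwall's inequality to any residual $\int f^2\langle v\rangle^6$ that might appear on the RHS (from interpolation of $\int f^2\langle v\rangle^4$ against the weighted norm) yields the claimed estimate.
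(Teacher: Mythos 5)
Your overall strategy coincides with the paper's: test (\ref{weak_sol}) with $f(1+|v|)^6$, use Lemma \ref{lem: A lower bound in terms of conserved quantities} to extract the weighted gradient from the diffusion, absorb the cubic term $\int f^3\langle v\rangle^6$ via the $\varepsilon$-Poincar\'e inequality (\ref{eqn:epsilon_Poincare_inequality strongerII_bounded_II_ineq}), and close with Gronwall. Your energy identity is correct, and your treatment of the cubic term and of the mixed term $\int (A\nabla f\cdot v)f\langle v\rangle^4$ (via $A$-Cauchy--Schwarz, or equivalently via the decay $a[f]\lesssim (1+|v|)^{-1}$ as in the paper) is sound.

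The genuine gap is in your handling of the drift term $3\int f^2(\nabla a[f]\cdot v)\langle v\rangle^4$. You propose to bound $\nabla a[f]$ pointwise by splitting the convolution $\int f(w)|v-w|^{-2}\,dw$ and controlling the near part by $\|f\|_{L^p}\,\|\mathbf{1}_{|z|\le 1}|z|^{-2}\|_{L^{p'}}$, claiming finiteness for $p>3/2$. This is false: $\int_{|z|\le 1}|z|^{-2p'}\,dz<\infty$ requires $2p'<3$, i.e.\ $p'<3/2$, i.e.\ $p>3$. For the range $3/2<p\le 3$ that the proposition must cover, $|z|^{-2}\notin L^{p'}_{loc}(\R^3)$ and $\nabla a[f]$ need not be bounded --- as you yourself note one line earlier. (The analogous argument does work for $a[f]$ itself, whose kernel $|z|^{-1}$ lies in $L^{p'}_{loc}$ exactly when $p>3/2$; but it does not transfer to $\nabla a[f]$.) The paper circumvents this by never seeking a pointwise bound on $\nabla a[f]$: it applies Young's inequality with exponents $(3,3/2)$ to get
\begin{align*}
\int_{\R^3}(1+|v|)^5 f^2|\nabla a[f]|\;dv \le \int_{\R^3}|\nabla a[f]|^3\;dv + C\int_{\R^3}(1+|v|)^{15/2}f^3\;dv,
\end{align*}
controls $\|\nabla a[f]\|_{L^3}\lesssim\|f\|_{L^{3/2}}$ by the Sobolev/Hardy--Littlewood--Sobolev bound (\ref{nabla_a_L3}) (with $f\in L^{3/2}$ by interpolation between $L^1$ and $L^p$), and feeds the resulting weighted $f^3$ term into the very same $\varepsilon$-Poincar\'e absorption you already use for the cubic term. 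Replacing your pointwise-bound step by this Young--plus--$L^3$ argument repairs the proof; as written, your argument only works for $p>3$.
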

\begin{proof}
There exists a universal constant $C$ such that
\begin{align}\label{a_inf_gen}
 \|a[f]\|_{L^\infty(\R^3)} \le C \|f\|^{1-q/3}_{L^{1}}\|f\|^{q/3}_{L^{q/(q-1)}},\quad \forall \; 1\le q <3.
\end{align}
In particular
\begin{align}\label{a_infty_norm_est}
 \|a[f]\|_{L^\infty(\R^3)} \le  C \|f\|^{1/3}_{L^{1}}\|f\|^{2/3}_{L^{2}} \le C\|f \langle v \rangle^{m/2} \|_{L^{2}},
\end{align}
for $m>3$ and $\langle v \rangle := (1+ |v|^2)^{1/2}$.  
For large $v$, one can obtain a sharper estimate: 
\begin{align}\label{a_infty_est}
 a[f](v,t) \le \frac{C(\|f\|_{L^{3/2^+}}, f_{in}, T)}{1+|v|}, \quad \forall v\in \mathbb{R}^3\; t\in [0,T].
\end{align}
Let $|v|$ be large enough;  for $2\ge s>3/2$ H\"older inequality yields:
\begin{align*}
|a[f]|  \le & |v|^{\frac{3-s'}{s'}}\left(\int_{B_{\frac{|v|}{2}}(|v|)} f^s\;dy\right)^{1/s}+ \frac{1}{|v|} \|f\|_{L^1(\R^3)} \\
 \le & c \frac{|v|^{\frac{3-s'}{s'}}}{(1+|v|)^{\lambda/s}}\left(\int_{\R^3} f^s(1+|y|)^{\lambda}\;dy\right)^{1/s}+ \frac{1}{|v|} \|f\|_{L^1(\R^3)},
\end{align*}
 with $\frac{1}{s}+\frac{1}{s'}=1$ and $s'<3$. Chose $\lambda={3}{(s-1)}$ so that $\frac{3-s'}{s'} - \frac{\lambda}{s}=-1$ and get 
\begin{align*}
|a[f](v)|  \le\frac{1}{(1+|v|)}\left(\int_{\R^3} f^s(1+|y|)^{{3}{(s-1)}}\;dy\right)^{1/s}+ \frac{1}{|v|} \|f\|_{L^1(\R^3)}.
\end{align*}
H\"older's inequality yields
\begin{align*}
\int_{\R^3} f^s(1+|y|)^{{3}{(s-1)}}\;dy \le \left(\int_{\R^3} f^pdy\right)^{1/\alpha'}\left(\int_{\R^3} f (1+|y|)^{\frac{3(p-1)}{(p-3/2)}}dy\right)^{1/\alpha},
\end{align*}
with $\alpha = (p-1)/(p-3/2)$. We use Lemma \ref{lem: propagation of moments} to bound the last integral and get (\ref{a_infty_est}).

Take now $\phi:=f(1+|v|)^N$ as test function in (\ref{weak_sol}):
\begin{align*}
\int_{\R^3} f_t f(1+|v|)^N\;dv 
\le &\; -\int_{\R^3} \langle A[f](1+|v|)^N\nabla f, \nabla f\rangle \;dv + N\int_{\R^3} a[f] (1+|v|)^{N-1} f |\nabla  f| \;dv  \\
&+ \int_{\R^3} f (1+|v|)^N \nabla f \cdot \nabla a[f]\;dv + N \int_{\R^3} f^2 (1+|v|)^{N-1} | \nabla a[f]|\;dv\\
= &\; I_1+I_2+I_3+I_4. 
\end{align*}
%
%
%
%
%
%
By Lemma \ref{lem: A lower bound in terms of conserved quantities} we have
$$
I_1 \le -c_1 \int_{\R^3} (1+|v|)^{N-3}|\nabla f|^2 \;dv. 
$$
Using (\ref{a_inf_gen}) to bound the $L^\infty$-norm of $a[f]$, Young's inequality yields 
%
%
%
%
%
%
\begin{align*}
I_2  & \le \omega \int_{\R^3} (1+|v|)^{N-3}|\nabla f|^2 \;dv  + \frac{C^2}{\omega} \int_{\R^3} (1+|v|)^{N-2} f^2 \;dv,
\end{align*}
where $C$ only depends on the $L^\infty(0,T,L^p)$ and on the $L^\infty(0,T,L^1)$-norm of $f$. 
%
%
%
%
%
%
{\noindent
From integration by parts one obtains
\begin{align*}
I_3 + I_4  \le &\;  c_1 \int_{\R^3} (1+|v|)^{N-1} f^2|\nabla a[f]| \;dv + c_{2}\int_{\R^3} (1+|v|)^N f^3\;dv\\
\lesssim &\; \int_{\R^3}|\nabla a[f]|^3\;dv +  \int_{\R^3} (1+|v|)^{2N} f^3 \;dv \\
\le & \; C\|f\|^3_{L^{3/2}} +  \varepsilon \int_{\mathbb{R}^3}  (1+|v|)^{-3}|\nabla f|^2 \;dv + \tilde
   \eta(\varepsilon)\int_{\mathbb{R}^3}	f^2\;dv,
\end{align*}
using Hardy-Littlewood-Sobolev inequality 
\begin{align}\label{nabla_a_L3}
\|\nabla a[f]\|_{L^{3p/(3-p)}(\R^3)} \le C\|f\|_{L^p(\R^3)} \;\; \forall \; p \in (1, 2],
\end{align}
and  (\ref{eqn:epsilon_Poincare_inequality strongerII_bounded_II_ineq}) with weight $(1+|v|)^{2N}  f$ to bound the weighted cubic norm of $f$. 
}
Summarizing we have 
\begin{align*}
\partial_t \int_{\R^3}  f^2(1+|v|)^N\;dv \le & -(c -\varepsilon)\int_{\R^3} (1+|v|)^{N-3}|\nabla f|^2 \;dv \\
& + C \int_{\R^3} (1+|v|)^{N-2} f^2 \;dv +  C(\|f\|_{L^p}) .
\end{align*}
Taking $\varepsilon$ sufficiently small we get the desired estimate. 
\end{proof}

\section{The contraction argument}\label{sec:contraction_argument}
We have the following uniqueness result.
}
\begin{theo}
Let $u, \phi \in L^\infty(0,T,L^p)$ for some $p>3/2$ be two solutions to the Landau equation with nonnegative initial data $f_{in}$ such that 
$$
\int_{\mathbb{R}^3} f_{in}\langle v \rangle^{k}\;dv <+\infty, \quad \int_{\mathbb{R}^3} f^2_{in}\langle v \rangle^{10}\;dv <+\infty,
$$
for any $0\le k\le\frac{46(p-1)}{p-3/2}$. Then $u = \phi$ .
\end{theo}
\begin{proof}
Define $w = u-\phi$. 
{Take $w\langle v \rangle^{m}$ with $m=4$ as test function in the resulting equation for $w$, After integration by parts one obtains}
\begin{align*}
\int_{\mathbb{R}^3} w^2(T)\langle v \rangle^{m} \;dv = & \; - \int_0^T  \int_{\mathbb{R}^3} A[u] \nabla w \cdot \nabla(w \langle v \rangle^{m}) \;dvdt - \int_0^T  \int_{\mathbb{R}^3} A[w] \nabla \phi \cdot \nabla (w\langle v \rangle^{m}) \;dvdt \\
& \; +\frac{1}{2}  \int_0^T  \int_{\mathbb{R}^3}  w \nabla a[u] \cdot \nabla (w\langle v \rangle^{m}) \;dvdt +  \int_0^T  \int_{\mathbb{R}^3} \phi \nabla a[w] \cdot \nabla (w\langle v \rangle^{m}) \;dvdt \\
&\;+\int_{\mathbb{R}^3} w^2_{in}\langle v \rangle^{m} \;dv \\
=:&\; I_1+I_2+I_3+I_4 + I_5.
\end{align*}
Using Lemma \ref{lem: A lower bound in terms of conserved quantities} one gets 
$$
I_1 \le  - (1-\varepsilon) \int_0^T  \int_{\mathbb{R}^3} \frac{\langle v \rangle^{m}}{(1+|v|)^3}  |\nabla w|^2 \;dvdt + \frac{m^2}{\varepsilon}  \int_0^T\|A[u]\|_{L^\infty}  \int_{\mathbb{R}^3}  w^2\langle v \rangle^{m} \;dvdt.
$$
To estimate $\|A[u]\|_{L^\infty}$ we use (\ref{a_inf_gen}).  For $I_2$, we use (\ref{a_infty_norm_est}) with $m=4$ and Young's inequality:
\begin{align*}
I_2 \le & \varepsilon \int_0^T \int_{\mathbb{R}^3} \frac{\langle v \rangle^{m}}{(1+|v|)^3}  |\nabla w|^2 \;dvdt +   \frac{1}{\varepsilon} \int_0^T \int_{\mathbb{R}^3} A^2[w] \langle v \rangle^{m}(1+|v|)^3|\nabla \phi|^2\;dvdt  \\
& + m  \int_0^T \int_{\mathbb{R}^3} w A[w]\langle v \rangle^{m-2} \nabla \phi \cdot  v \;dvdt\\
\le & \varepsilon \int_0^T \int_{\mathbb{R}^3} \frac{\langle v \rangle^{m}}{(1+|v|)^3}  |\nabla w|^2 \;dvdt +   \frac{1}{\varepsilon} \int_0^T \int_{\mathbb{R}^3} A^2[w] \langle v \rangle^{m}(1+|v|)^3|\nabla \phi|^2\;dvdt\\
&+m \int_0^T \int_{\mathbb{R}^3} \langle v \rangle^{m} w^2 \;dvdt  + m  \int_0^T \int_{\mathbb{R}^3} A^2[w]\langle v \rangle^{m-2}|\nabla \phi|^2   \;dvdt \\
\le&    \varepsilon \int_0^T \int_{\mathbb{R}^3} \frac{\langle v \rangle^{m}}{(1+|v|)^3}  |\nabla w|^2 \;dvdt  + \int_0^T B(t) \int_{\mathbb{R}^3} \langle v \rangle^{m} w^2 \;dvdt
\end{align*}
with $B(t) :=  \int_{\mathbb{R}^3} \langle v \rangle^{m+3} |\nabla \phi|^2\;dv + 1$. Note that $B(t)$ is integrable, as shown in Proposition \ref{Prop_Weighted_Gradient_Estimate} for $N=m+6=10$. 
We rewrite $I_3$ as 
\begin{align*}
I_3 =& \frac{1}{4}  \int_0^T  \int_{\mathbb{R}^3}  \langle v \rangle^{-m} \nabla a[u] \cdot \nabla (w^2\langle v \rangle^{2m}) \;dvdt \\
=&  \frac{1}{4}  \int_0^T  \int_{\mathbb{R}^3} u  \langle v \rangle^{m} w^2 \;dvdt + {m}  \int_0^T  \int_{\mathbb{R}^3} w \nabla w \cdot v \langle v \rangle^{m-2} a[u]\;dvdt \\
&\;+ \frac{m}{2}  \int_0^T  \int_{\mathbb{R}^3} w^2   v \cdot \nabla \langle v \rangle^{m-2} a[u]\;dvdt  + \frac{3m}{2}  \int_0^T  \int_{\mathbb{R}^3} w^2  \langle v \rangle^{m-2} a[u]\;dvdt \\
\le & \frac{1}{4}  \int_0^T  \int_{\mathbb{R}^3} u  \langle v \rangle^{m} w^2 \;dvdt  + c_m \|a[u]\|_{L^\infty(\mathbb{R}^3)}   \int_0^T  \int_{\mathbb{R}^3} w^2  \langle v \rangle^{m-2}\;dvdt \\
&\;+ \varepsilon   \int_0^T \int_{\mathbb{R}^3} \frac{\langle v \rangle^{m}}{(1+|v|)^3}  |\nabla w|^2 \;dvdt  + \frac{1}{\varepsilon}  \int_0^T \int_{\mathbb{R}^3} w^2 a^2[u]\langle v \rangle^{m+1}\;dvdt.
\end{align*}
To bound the first integral we use (\ref{eqn:epsilon_Poincare_inequality strongerII_bounded_II_ineq}) with $N=2m$ and  get 
$$
  \int_{\mathbb{R}^3} u  \langle v \rangle^{m} w^2 \;dv \le  \varepsilon \int_{\mathbb{R}^3} \frac{ |\nabla w|^2}{(1+|v|)^3}  \;dv + C  \int_{\mathbb{R}^3}w^2 \;dv.
$$
This yields 
\begin{align*}
I_3  \le &\;  \varepsilon   \int_0^T \int_{\mathbb{R}^3} \frac{\langle v \rangle^{m}}{(1+|v|)^3}  |\nabla w|^2 \;dvdt \\
& \;   + \int_0^T  B_1(t)   \int_{\mathbb{R}^3}  \langle v \rangle^{m} w^2 \;dvdt  ,
\end{align*}
with $B_1(t) := C(  \|a[u]\|_{L^\infty(\mathbb{R}^3)}+  \|a[u] \langle v \rangle \|_{L^\infty(\mathbb{R}^3)} +1)$. Note that $B_1(t)$ is integrable, thanks to (\ref{a_infty_est}). 
Finally, 
\begin{align*}
I_4  \le &  \;  \int_0^T  \| \nabla a[w] \|_{L^6(\R^3)} \left\| \frac{  \langle v \rangle^{m/2}\nabla w  }{(1+|v|)^{3/2}}  \right\|_{L^2(\R^3)} \left\|{\phi}{ \langle v \rangle^{m/2+3/2}} \right\|_{L^3(\R^3)} \; dt \\
&\;  + m \int_0^T  \| \nabla a[w] \|_{L^6(\R^3)}     \left\| { w \langle v \rangle^{m/2}  } \right\|_{L^2(\R^3)}  \left\|{\phi}{\langle v \rangle^{m/2-1}} \right\|_{L^3(\R^3)} \; dt \\
 \le &\;   \frac{1}{\varepsilon} \int_0^T \left\|w\right\|^2_{L^2(\R^3)} \left( \int_{\R^3} \phi^3 (1+|v|)^{3m/2+9/2} \; dv \right)^{2/3} \; dt + \varepsilon \int_0^T \int_{\R^3} \frac{|\nabla w|^2 \langle v \rangle^{m}}{(1+|v|)^3} \; dvdt \\
&  \; + m     \int_0^T   \left\| { w \langle v \rangle^{m/2}  } \right\|^2_{L^2(\R^3)} \left( \int_{\mathbb{R}^3} {\phi^3 }{(1+|v|)^{3m/2-3}} \;dv\right)^{1/3} \; dt .
\end{align*}
Thanks again to (\ref{eqn:epsilon_Poincare_inequality strongerII_bounded_II_ineq}), we get 
$$
 \int_{\R^3} \phi^3 (1+|v|)^{3m/2+9/2} \; dv  \le C_{\varepsilon, m, \phi_{in}}  \int_{\R^3} \frac{|\nabla\phi|^2}{(1+|v|)^3} \; dv + \int_{\R^3} \phi^2 \; dv,
$$
and conclude that 
\begin{align*}
I_4 \le C_{\varepsilon, m, \phi_{in}}  \int_0^T  B_2(t)   \left\| { w \langle v \rangle^{m/2}  } \right\|^2_{L^2(\R^3)} \;dt + \varepsilon \int_0^T \int_{\R^3} \frac{|\nabla w|^2 \langle v \rangle^{m}}{(1+|v|)^3} \; dvdt 
\end{align*}
with 
$$
B_2(t) := \left(   \int_{\R^3} \frac{|\nabla\phi|^2}{(1+|v|)^3} \; dv + \int_{\R^3} \phi^2 \; dv\right)^{2/3}+ \left(   \int_{\R^3} \frac{|\nabla\phi|^2}{(1+|v|)^3} \; dv + \int_{\R^3} \phi^2 \; dv\right)^{1/3}.
$$
 The function $B_2(t)$ is integrable thanks to Proposition (\ref{Prop_Weighted_Gradient_Estimate}) with $N=0$. 
  
Summarizing the estimates for $I_1, .. , I_4$, for $\varepsilon$ small enough we get
$$
\int_{\R^3} w^2(T)\langle v \rangle^{m}  \; dv \le  \int_{\R^3} w_{in}^2\langle v \rangle^{m}  \; dv + C \int_0^T (B(t)+B_1(t) + B_2(t) ) \int_{\R^3} w^2\langle v \rangle^{m}  \; dvdt,
$$ 
with $\int_0^T B(t)+B_1(t) + B_2(t)  \; dt < +\infty$.

Since $w_{in}(\cdot)=0$, Gronwall's inequality yields 
$$
\int_{\R^3} w^2(T)\langle v \rangle^{m} \; dv \le 0,
$$
and this concludes the proof.

\end{proof}

\end{document}